\newtheorem{thm}{Theorem}[section]
\newtheorem{lem}[thm]{Lemma}
\newtheorem{cor}[thm]{Corollary}
\theoremstyle{definition}
\newtheorem{defn}[thm]{Definition} %[section]
\theoremstyle{remark}
\title[Titchmarsh theorems on Damek-Ricci spaces]{Titchmarsh theorems on Damek-Ricci spaces via moduli of continuity of higher order}
\author{Manoj Kumar}
\address{Manoj Kumar \endgraf
	Department of Mathematics
	\endgraf
	Indian Institute of Science Bangalore, India}
\email{manojk9t3@gmail.com}
\author{Vishvesh Kumar}
\address{Vishvesh Kumar \endgraf
	Department of Mathematics: Analysis, Logic and Discrete Mathematics
	\endgraf
	Ghent University, Belgium}
\email{vishveshmishra@gmail.com, Vishvesh.Kumar@UGent.be}
\author[Michael Ruzhansky]{Michael Ruzhansky}
\address{
	Michael Ruzhansky:
	\endgraf
	Department of Mathematics: Analysis, Logic and Discrete Mathematics
	\endgraf
	Ghent University, Belgium
	\endgraf
	and
	\endgraf
	School of Mathematical Sciences
	\endgraf
	Queen Mary University of London
	\endgraf
	United Kingdom
	\endgraf
	{\it E-mail-} {\rm michael.ruzhansky@ugent.be}
}
\begin{document}
	
	\begin{abstract} 
		
		A classical theorem of Titchmarsh relates the $L^2$-Lipschitz functions and decay of the Fourier transform of the functions. In this note, we prove the Titchmarsh theorem for Damek-Ricci space (also known as harmonic $NA$ groups) via moduli of continuity of higher orders. We also prove an analogue of another Titchmarsh theorem which provides integrability properties of the Fourier transform for functions in the H\"older Lipschitz spaces.
		
	\end{abstract}
	\keywords{Titchmarsh theorems, Damek-Ricci spaces, Harmonic $NA$ groups,  Helgason transform, Moduli of continuity, Generalized Lipschitz class}
	\subjclass[2010]{Primary 43A85 Secondary 22E30}
	\maketitle
	%\tableofcontents 
	
	\section{Introduction}  \label{Sec1}
	
	The classical Titchmarsh theorem \cite[Theorem 85]{Titch} characterizes the $L^2$-Lipschitz functions in terms of certain decay of the Fourier transform of the functions. It can be stated as follows: Let $\alpha\in (0,1)$ and $f\in L^2(\mathbb{R}).$ Then, $\|\tau_tf-f\|_2\leq C_1t^\alpha$ for all sufficiently small $t>0,$ if and only if 
	$$\int_{|\xi|>\frac{1}{t}}|\widehat{f}(\xi)|^2\,d\xi\leq C_2t^{2\alpha},$$ for all sufficiently small $t>0.$ Here, $\widehat{f}$ is the Fourier transform of $f$ and $\tau_t$ is the translation operator. 
	
	The Titchmarsh theorem has been extensively studied in many different contexts on various groups, for instances, the higher dimensional Euclidean spaces \cite{Younis,Bray}, the Vilenkin groups \cite{Younis2}, the special linear group of real matrices of order two $SL_2(\mathbb{R})$ \cite{Younis1}, the rank one symmetric spaces of non-compact type \cite{Platonov,FBK}, the $p$-adic groups \cite{Platonov17} and the compact homogeneous manifolds \cite{DDR}. In terms of the moduli of continuity, the theorem has been explored on $\mathbb{R}$ \cite{Volo,DFR} and the rank one symmetric spaces \cite{FRS}. See \cite{BrayPinsky,GT,J} for some growth properties of the Fourier transform on certain spaces via moduli of continuity.
	
Damek-Ricci spaces, also known as Harmonic $NA$ groups, are natural generalizations of the Iwasawa $NA$ groups of the real rank one simple Lie groups. Particularly, the rank one symmetric spaces of non-compact type form a subclass of the Damek-Ricci spaces. In general, the Damek-Ricci spaces need not be symmetric spaces. 
	
Very recently, Titchmarsh type results for Damek-Ricci spaces were explored in  \cite{KR,ED}. In this note, we extend the classical Titchmarsh theorem to the setting of Damek-Ricci spaces in terms of the moduli of continuity of higher orders (see Corollary \ref{TitThm}). We note here that it is new on the Damek-Ricci spaces even in the case of moduli of continuity of order one.

\cite[Theorem 1.5 (A)]{J} provides certain decay properties of the Helgason Fourier transform for functions in the generalized Besov spaces. We also prove an analogue of this theorem on Damek-Ricci spaces. 

Another Titchmarsh theorem \cite[Theorem 84]{Titch} provides the integrability properties of the Fourier transform of functions belonging to the H\"older-Lipschitz spaces. This theorem has been studied for various groups, for examples, $SL_2(\mathbb{R})$ \cite{Younis1}, the Euclidean space \cite{Bray}, the compact homogeneous manifolds \cite{DDR} and the Damek-Ricci spaces \cite{KR,ED}. We also prove a generalization of this Titchmarsh theorem over Damek-Ricci spaces.

\section{Preliminaries}
In this section, we recall basics required about the Damek-Ricci spaces and moduli of continuity. Throughout the paper, we denote by $C,C_1,C_2,...$ constants whose values may vary from one line to the other.
	\subsection{Fourier analysis on Damek-Ricci spaces} \label{Ess}
	
	For the details about the analysis and geometry of Damek-Ricci spaces and associated Fourier analysis, one can refer to \cite{Damek,DamekRicci,DamekRicci1, Di-Blasio,Anker96,ACB97,CDK91, RS, KRS, Kaplan, Astengo, Astengo2, KR1}. 
	
	Let $\mathfrak{n}$ be a two-step real nilpotent Lie algebra, equipped with an inner product $\langle \,,\, \rangle.$ Let $\mathfrak{z}$ denote the center of $\mathfrak{n}$ and let $\mathfrak{v}$ denote the orthogonal complement of $\mathfrak{z}$ in $\mathfrak{n}$ w.r.t. $\langle \,,\, \rangle.$ 
	Suppose that the dimensions of $\mathfrak{v}$ and $\mathfrak{z}$ are denoted by $m$ and $k$ respectively as real vector spaces. The Lie algebra $\mathfrak{n}$ is said to be $H$-type algebra if for each $Z \in \mathfrak{z},$ the map $J_Z:\mathfrak{v} \rightarrow \mathfrak{v}$ given by 
	$$\langle J_Z X, Y\rangle = \langle Z, [X, Y] \rangle,\,\,\,\,\,\,X,Y \in \mathfrak{v},\, Z\in \mathfrak{z},$$ satisfies the condition $J_Z^2=-\|Z\|^2I_{\mathfrak{v}}.$ Here, $I_{\mathfrak{v}}$ denotes the identity operator on $\mathfrak{v}.$ Kaplan \cite{Kaplan} proved that for $Z \in \mathfrak{z}$ with $\|Z\|=1$ one has $J_Z^2=-I_{\mathfrak{v}}$; that is, $J_Z$ induces a complex structure on $\mathfrak{v}.$ Therefore, $m=\dim(\mathfrak{v})$ is always even.
	
	A connected and simply connected Lie group $N$ is said to be $H$-type if its Lie algebra is an $H$-type algebra. Since $\mathfrak{n}$ is nilpotent, it follows that the exponential map is a global diffeomorphism. Hence, the elements of $N=\exp{\mathfrak{n}}$ can be parametrized by $(X, Z)$, for $X \in \mathfrak{v}$  and $Z \in \mathfrak{z}.$ From the Campbell-Baker-Hausdorff formula, the multiplication on $N$ is given by
	$$(X, Z) (X',Z')= \left(X+X', Z+Z'+\frac{1}{2}[X,X']\right).$$ 
 Note that the group $A= \mathbb{R}_+^*$ acts on $N$ by nonisotropic dilations: $(X, Y) \mapsto (a^{\frac{1}{2}}X, aZ).$ Therefore, by setting $\dim(\mathfrak{z})=k,$ the homogeneous dimension of $N$ is given by   $Q=\frac{m}{2}+k.$  At times, we also use symbol $\rho$ for $\frac{Q}{2}.$ Hence, $\dim(\mathfrak{s})=m+k+1,$ denoted by $d.$ 
	
	Let $S=N \ltimes A$ be the semidirect product of $N$ with $A$ under the aforementioned action. Therefore, the group multiplication on $S$ is defined by 
	$$(X,Z,a)(X',Z',a')= \left(X+a^{\frac{1}{2}}X', Z+aZ'+\frac{1}{2}a^{\frac{1}{2}} [X, X'], aa'\right).$$
	Then, $S$ is a solvable (connected and simply connected) Lie group with Lie algebra $\mathfrak{s}=\mathfrak{z}\oplus \mathfrak{v} \oplus \mathbb{R}$ and Lie bracket 
	$$[(X, Z, \ell), (X', Z', \ell')]= \left(\frac{1}{2} \ell X'-\frac{1}{2} \ell'X, \ell Z'-\ell'Z+[X, X]', 0\right).$$
	The group $S$ is equipped with the left-invariant Riemannian metric induced by 
	$$ \langle (X, Z, \ell), (X', Z', \ell')\rangle= \langle X, X'\rangle+\langle Z, Z'\rangle+\ell \ell'$$ on $\mathfrak{s}.$  The associated left Haar measure $dx$ on the group $S$ is given by $a^{-Q-1} dX\,dZ\,da=a^{-Q-1} dn\,da,$ where $dX,\, dZ$ and $da$ are the Lebesgue measures on $\mathfrak{v}, \mathfrak{z}$ and $\mathbb{R}_+^*,$ respectively. The elements of $A$ will be identified with $a_t=e^t,$ $t \in \mathbb{R}.$  We will also write any element $s \in S$ as $na_t$ by writing $S=NA.$ In particular, any element $a_t \in A$ can be thought as an element of $S$ by writing $a_t=e_N a_t,$ where $e_N$ is the identity element of $N.$ The group $S$ can be realized as the unit ball $B(\mathfrak{s})$ in $\mathfrak{s}$ using the Cayley transform $C: S \rightarrow  B(\mathfrak{s})$ (see \cite{Anker96}).
	
	To define the Helgason Fourier transform on the group $S,$ we need to describe the notion of the Poisson kernel (\cite{ACB97}). The Poisson Kernel $\mathcal{P}:S \times N \rightarrow \mathbb{R}$ is defined by $\mathcal{P}(na_t, n')= P_{a_t}(n'^{-1}n),$ where $$ P_{a_t}(n)= P_{a_t}(X, Z)=C a_t^Q \left( \left(a_t+\frac{|X|^2}{4} \right)^2+|Z|^2 \right)^{-Q},\,\,\,\, n=(X, Z) \in N\mbox{ and } t\in\mathbb{R}.$$	The value of $C$ is appropriately chosen so that $\int_N P_a(n) dn=1$ and $P_1(n) \leq 1.$ Now, we list some useful properties of the Poisson kernel; see \cite{KRS,RS,ACB97}. For $\lambda \in \mathbb{C},$ the complex power of the Poisson kernel is given by
	$$\mathcal{P}_\lambda(x, n)= \mathcal{P}(x, n)^{\frac{1}{2}-\frac{i \lambda}{Q}}.$$ It is known  (\cite{RS, ACB97}) that for each fixed $x \in S,$ $\mathcal{P}_\lambda(x, \cdot) \in L^p(N)$ for $1 \leq p \leq \infty$ if $\lambda = i \gamma_p \rho,$ where $\gamma_p= \frac{2}{p}-1.$ An important feature of the Poisson kernel $\mathcal{P}_\lambda(x,n)$ is that it is constant on the hypersurfaces $H_{n, a_t}=\{n \sigma(a_t n'): \, n' \in N\}.$ Here, $\sigma$ denotes the geodesic inversion on $S;$ see \cite{CDK91}.
	
	Let $\Delta_S$ denote the Laplace-Beltrami operator on $S.$ Then, for every fixed $n \in N,$ the function $\mathcal{P}_\lambda(x, n)$ is an eigenfunction of $\Delta_S$ with eigenvalue $-(\lambda^2+\frac{Q^2}{4});$ see \cite{ACB97}. The Helgason Fourier transform of a measurable function $f$ on $S$ is given by $$\widetilde{f}(\lambda, n)= \int_S f(x)\, \mathcal{P}_\lambda(x, n) dx$$ provided that the integral exists. 
	
	For $f \in C_c^\infty(S),$ the Fourier inversion formula \cite[Theorem 4.4]{ACB97} is given by
	$$f(x)= C \int_{\mathbb{R}} \int_N \widetilde{f}(\lambda, n)\,\mathcal{P}_{-\lambda}(x, n) |c(\lambda)|^{-2} \, d\lambda dn.$$ The Plancherel theorem \cite[Theorem 5.1]{ACB97} states that the Helgason Fourier transform extends to an isometry from $L^2(S)$ onto the space $L^2(\mathbb{R}_+ \times N, |c(\lambda)|^{-2}d\lambda dn).$ For precise value of the constants; see \cite{ACB97}. The function $|c(\lambda)|$ satisfies:
	\begin{align}\label{measureconstant}
		|c(\lambda)|^{-2}\asymp \begin{cases} \lambda^2\quad & \mbox{ if } \lambda \in [0,1] \\ \lambda^{d-1} \quad& \mbox{ if } \lambda>1\end{cases}.
	\end{align} 
	 See \cite[Theorem 1.14]{MV} and \cite[Lemma 4.8]{RS}.

	Let $e$ denote the identity element of $S$ and let $\tilde{\mu}$ be the metric induced by the canonical left invariant Riemannian structure on $S.$ A function $f$ on $S$ is said to be {\it radial} if for all $x, y \in S, $ $f(x)=f(y)$ whenever $\tilde{\mu}(x,e)=\tilde{\mu}(y,e).$ Note that the radial functions on $S$ can be identified with the functions $f=f(r)$ of the geodesic distance  $r=\tilde{\mu}(x, e) \in [0, \infty).$  Clearly,  $\tilde{\mu}(a_t, e)=|t|$ for $t \in \mathbb{R},$ where we have identified $a_t$ with $e_Na_t \in S:=NA$ with $e_N$ being the identity of $N.$ For any radial function $f,$ sometimes we use the notation $f(a_t)=f(t).$ The elementary spherical function $\phi_\lambda(x)$ is given by $$\phi_\lambda(x) :=\int_N \mathcal{P}_\lambda(x, n) \mathcal{P}_{-\lambda}(x, n)\, dn.  $$
	Note that the function $\phi_\lambda$ is a radial eigenfunction of the Laplace-Beltrami operator  $\Delta_S$ with eigenvalue $-(\lambda^2+\frac{Q^2}{4});$ see \cite{Anker96, ACB97}. Also, $\phi_\lambda(x)=\phi_{-\lambda}(x),\,\, \phi_\lambda(x)=\phi_\lambda(x^{-1})$ and $\phi_\lambda(e)=1.$ 
	In \cite{Anker96}, the authors proved that the radial part (in geodesic polar coordinates) of the Laplace-Beltrami operator $\Delta_S$ given by 
	$$\textnormal{rad}\, \Delta_S= \frac{\partial^2}{\partial t}+\left\{ \frac{m+l}{2} \coth{ \frac{t}{2}}+\frac{k}{2} \tanh{\frac{t}{2}} \right\} \frac{\partial}{\partial t},$$
	is (by substituting $r=\frac{t}{2}$) equal to  $\frac{1}{4} \mathcal{L}_{\alpha, \beta}$  with indices $\alpha=\frac{m+l+1}{2}$ and $\beta=\frac{l-1}{2}.$ Here, $\mathcal{L}_{\alpha, \beta}$ is the Jacobi operator and Koornwinder \cite{Koorn} treated it in detail. It is worth pointing out that we are in the ideal condition of the Jacobi analysis with $\alpha>\beta>\frac{-1}{2}.$ To be more precise, the Jacobi functions $\phi_\lambda^{\alpha, \beta}$ are related to the elementary spherical functions $\phi_\lambda$ by  $\phi_\lambda(t)=\phi_{2 \lambda}^{\alpha, \beta}(\frac{t}{2});$ see \cite{Anker96}. Hence, we have the following important lemma; see \cite{Platonov}.
	\begin{lem} \label{estijac}  Let $t, \lambda \in \mathbb{R}_+.$ Then,
		\begin{itemize}
			\item $|\phi_\lambda(t)| \leq 1.$
			\item $|1-\phi_\lambda(t)| \leq \frac{t^2}{2} \Big(4 \lambda^2+\frac{Q^2}{4}\Big).$
			\item There exists a constant $C>0$ such that $|1-\phi_\lambda(t)| \geq C$ for  $\lambda t \geq 1.$ 
		\end{itemize}
	\end{lem}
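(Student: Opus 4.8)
The plan is to prove the three estimates for the elementary spherical function by exploiting the identity $\phi_\lambda(t)=\phi_{2\lambda}^{\alpha,\beta}(t/2)$ relating $\phi_\lambda$ to the Jacobi functions $\phi_\mu^{\alpha,\beta}$, so that all three bounds follow from known facts about Jacobi functions in the ideal regime $\alpha>\beta>-1/2$. The first bound $|\phi_\lambda(t)|\le 1$ is the standard normalization estimate for Jacobi functions: since $\phi_\mu^{\alpha,\beta}$ is the unique even, analytic solution of the Jacobi differential equation normalized by $\phi_\mu^{\alpha,\beta}(0)=1$, and it admits the representation $\phi_\mu^{\alpha,\beta}(r)=\,_2F_1\!\left(\tfrac{\alpha+\beta+1+i\mu}{2},\tfrac{\alpha+\beta+1-i\mu}{2};\alpha+1;-\sinh^2 r\right)$, one reads off $|\phi_\mu^{\alpha,\beta}(r)|\le \phi_0^{\alpha,\beta}(r)\cdot(\text{const})$; more directly, on Damek-Ricci spaces one uses $\phi_\lambda(e)=1$ together with $\phi_\lambda(x)=\int_N \mathcal P_\lambda(x,n)\mathcal P_{-\lambda}(x,n)\,dn$ and the Cauchy-Schwarz inequality, since $\mathcal P_\lambda(x,\cdot)$ and $\mathcal P_{-\lambda}(x,\cdot)$ are complex conjugates of each other when $\lambda$ is real.

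For the second estimate I would use that $\phi_\lambda$ is a radial eigenfunction of $\Delta_S$ with eigenvalue $-(\lambda^2+\tfrac{Q^2}{4})$, together with $\phi_\lambda(0)=1$ and the fact that $\phi_\lambda$ is even in the radial variable (so its radial derivative vanishes at the origin). Writing $g(t)=1-\phi_\lambda(t)$, Taylor's theorem with the differential equation $\mathrm{rad}\,\Delta_S\,\phi_\lambda=-(\lambda^2+\tfrac{Q^2}{4})\phi_\lambda$ gives $g(0)=0$, $g'(0)=0$, and $g''(0)=(\lambda^2+\tfrac{Q^2}{4})$ after accounting for the factor from the $r=t/2$ substitution. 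Since $|\phi_\lambda|\le 1$, the second derivative of $g$ is controlled uniformly, and integrating twice from $0$ yields the quadratic bound $|1-\phi_\lambda(t)|\le \tfrac{t^2}{2}(4\lambda^2+\tfrac{Q^2}{4})$, where the factor $4$ on $\lambda^2$ comes precisely from the doubling $\phi_\lambda(t)=\phi_{2\lambda}^{\alpha,\beta}(t/2)$.

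The third estimate is the genuinely nontrivial one and I expect it to be the main obstacle. It is a lower bound asserting that $\phi_\lambda(t)$ is bounded away from $1$ once $\lambda t\ge 1$, and unlike the first two it cannot follow from a local Taylor expansion. The natural route is the large-argument asymptotics of Jacobi functions: in the region $\mu r\gtrsim 1$ one has the Harish-Chandra type expansion $\phi_\mu^{\alpha,\beta}(r)\sim c(\mu)\,e^{(i\mu-\rho_{\alpha,\beta})r}+c(-\mu)\,e^{(-i\mu-\rho_{\alpha,\beta})r}$, which exhibits oscillation and decay, forcing $\phi_\mu^{\alpha,\beta}(r)$ to be uniformly separated from its value $1$ at the origin. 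One must be careful to patch together the regime where $\mu r$ is of moderate size (using continuity and the fact that $\phi_\mu^{\alpha,\beta}$ is strictly less than $1$ for $r>0$) with the regime where $\mu r$ is large (using the decay from the asymptotics), and to verify that the separating constant $C$ can be chosen \emph{uniformly} in both $\lambda$ and $t$ subject only to $\lambda t\ge 1$. Since this is precisely the content of the corresponding estimate for rank-one symmetric spaces, I would invoke the result of Platonov cited in the statement rather than reprove the asymptotic analysis, noting that the transfer to Damek-Ricci spaces is immediate through the Jacobi-function identification with indices $\alpha=\tfrac{m+l+1}{2}$, $\beta=\tfrac{l-1}{2}$ satisfying $\alpha>\beta>-\tfrac12$.
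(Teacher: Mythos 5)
The paper gives no proof of this lemma at all: it is quoted from Platonov's work once the identification $\phi_\lambda(t)=\phi_{2\lambda}^{\alpha,\beta}(t/2)$ with Jacobi functions (in the ideal range $\alpha>\beta>-\tfrac12$) has been set up, which is exactly the reduction you describe, so your proposal follows the same route and simply supplies more detail than the paper does. The only step I would flag is your claim in the second item that the second derivative of $g=1-\phi_\lambda$ is ``controlled uniformly'' by $|\phi_\lambda|\le 1$: the radial Laplacian has a first-order term whose coefficient blows up like $c/t$ at the origin, so a pointwise bound on $g''$ is not immediate, and the clean argument instead integrates the eigenvalue equation twice against the volume density (yielding $|1-\phi_\lambda(t)|\le(\lambda^2+\rho^2)\int_0^t s\,ds$ up to the bookkeeping from the substitution $r=t/2$); your treatment of the third item, including the need for uniformity across the regimes $\lambda t\sim 1$ and $\lambda t\gg 1$ and the eventual appeal to Platonov, is exactly what the paper implicitly relies on.
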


	Let $\sigma_t$ denote the normalized surface measure induced by the left invariant Riemannian metric on the geodesic sphere $S_t=\{y \in S: \tilde{\mu}(y, e)=t \}$ of radius $t$. Then, $\sigma_t$ is a nonnegative radial measure. The spherical mean operator $M_t$ is given by $M_tf: =f*\sigma_t$ for a suitable function $f$ on $S.$ Note that $M_tf(x)=\mathcal{R}(f^x)(t).$ Here, $f^x$ is the right translation of function $f$ by $x$ and $\mathcal{R}$ is the radialization operator given by
	$$\mathcal{R}f(x)=\int_{S_\nu} f(y)\,d\sigma_{\nu}(y)$$ for a suitable function $f$ and $\nu=r(x)= \tilde{\mu}(C(x), 0),$ where $C$ is the Cayley transform. It is easy to check that $\mathcal{R}f$ is a radial function. Also, $\mathcal{R}f=f$ for any radial function $f.$ Thus, for a radial function $f,$\, $M_tf$ is the usual translation of $f$ by $t.$ The spherical mean operator  $M_t$ is a bounded linear operator on $L^p(S)$ and  
	\begin{align}\label{FTTrans}
		\widetilde{M_t f}(\lambda, n)= \widetilde{f}(\lambda, n) \phi_\lambda(t)
	\end{align} for a suitable function $f$ on $S.$ Further, $M_tf$ converges to $f$ as $t \rightarrow 0.$ See \cite{KRS}.

	\subsection{Moduli of continuity of higher orders}
	
	Let $\omega$ be a mapping from $I\subset\mathbb{R}$ to the set $[0,\infty).$ The map $\omega$ is said to be almost increasing if there exists a constant $C\geq 1$ such that $\omega(t)\leq C\omega(s)$ whenever $t\leq s$ and $t,s\in I.$ The map $w$ is said to be almost decreasing if there exists a constant $C\geq 1$ such that $\omega(t)\leq C\omega(s)$ whenever $t\geq s$ and $t,s\in I.$
	
	Let $\delta_0>0$ and $k\in\mathbb{R}_+.$ A continuous function $\omega_k:[0,\delta_0]\rightarrow\mathbb{R}_+$ is said to be a {\it $k$th order  modulus of continuity} if $\omega_k(0)=0,$ $\omega_k(t)$ is almost increasing on $t\in[0,\delta_0]$ and $\frac{\omega_k(t)}{t^k}$ is almost decreasing on $t\in[0,\delta_0].$ Note that if $\omega$ is a $k$th-order modulus of continuity then $\omega$ is also an $m$th-order modulus of continuity for all $m\geq k.$
	
	We say that the $k$th-order modulus of continuity $\omega_k$ belongs to the Zygmund class $\mathscr{Z}^0$ if there exists a constant $C>0$ such that $$\int_0^t\frac{\omega_k(s)}{s}\, ds\leq C\omega_k(t),\ t\in [0,\delta_0].$$ We say that the $k$th-order modulus of continuity $\omega_k$ belongs to the Zygmund class $\mathscr{Z}_k$ if there exists a constant $C$ such that $$\int_t^{\delta_0}\frac{\omega_k(s)}{s^{1+k}}\, ds\leq C\frac{\omega_k(t)}{t^k},\ t\in [0,\delta_0].$$ The class  $\mathscr{Z}^0\cap\mathscr{Z}_k$ is called the  Zygmund-Bari-Stechkin class. Some classical examples in the Zygmund-Bari-Stechkin class are $t^\alpha,\ t^\alpha\left(\ln{\frac{1}{t}}\right)^\gamma$ and $t^\alpha\left(\ln\ln\frac{1}{t}\right)^\gamma,$ where $\alpha\in(0,k)$ and $\gamma\in\mathbb{R}.$ For more details on Zygmund classes, see \cite{KMRS}. 
	
	The crucial behaviour of the functions discussed above is near zero. In Theorem \ref{ConverseTitOP}, we also need to do certain estimations over $[\delta_0,\infty)$. Note that there is no loss of generality of the results in assuming certain restrictions, as in Theorem \ref{ConverseTitOP}, on $\omega_k$ over the interval $[\delta_0,\infty).$

	\section{Titchmarsh theorems on Damek-Ricci spaces}
	In this section, we present the main result of this paper that provides a description of generalised Lipschitz class functions in terms of the Helgason Fourier transform. We also prove the decay properties of the Helgason Fourier transform for the functions in the generalized Besov spaces. Lastly, we discuss certain integrability properties of the Helgason Fourier transform for the functions in the H\"older Lipschitz spaces.
	
	Now, we begin with the following definition of generalised Lipschitz class. For that, let us denote the harmonic $NA$ group by $S$ and the $k$th-order modulus of continuity by $\omega_k$.
	
	\begin{defn}
		A function $f\in L^2(S)$ is said to be in the generalised Lipschitz class $\mathrm{Lip}(\omega_k)$ if there exists a positive constant $C$ such that for all sufficiently small $t\in (0,1)$ we have $\|M_tf-f\|_2\leq C\omega_k(t).$
	\end{defn}
	
	The following theorem provides a growth property of image of the generalised Lipschitz functions under the Helgason Fourier transform.
	
	\begin{thm}\label{TitOP}
		Let $\omega_k$ be a $k$th-order modulus of continuity. If $f\in\mathrm{Lip}(\omega_k)$ then there exists a positive constant $C$ such that for all sufficiently small $t\in (0,1),$ $$\int_{\frac{1}{t}}^\infty\int_N|\widetilde{f}(\lambda,n)|^2\,dn\,d\lambda\leq Ct^{d-1}\omega_k(t)^2.$$
	\end{thm}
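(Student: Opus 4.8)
The plan is to convert the hypothesis on $\|M_tf-f\|_2$ into a spectral (Plancherel-side) statement and then isolate the high-frequency part of the integral using the key lower bound on $|1-\phi_\lambda(t)|$ from Lemma \ref{estijac}. The starting point is the Fourier-multiplier identity \eqref{FTTrans}, namely $\widetilde{M_tf}(\lambda,n)=\widetilde f(\lambda,n)\phi_\lambda(t)$, which gives $\widetilde{(M_tf-f)}(\lambda,n)=\widetilde f(\lambda,n)\,(\phi_\lambda(t)-1)$. Applying the Plancherel theorem for the Helgason transform (the isometry onto $L^2(\mathbb R_+\times N,|c(\lambda)|^{-2}d\lambda\,dn)$ quoted in Subsection \ref{Ess}) turns the Lipschitz hypothesis into
\begin{equation}\label{planchereleq}
\int_0^\infty\int_N|\widetilde f(\lambda,n)|^2\,|1-\phi_\lambda(t)|^2\,|c(\lambda)|^{-2}\,dn\,d\lambda\;\leq\; C\,\omega_k(t)^2
\end{equation}
for all sufficiently small $t$.

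Next I would restrict the integral in \eqref{planchereleq} to the high-frequency range $\lambda\in[\tfrac1t,\infty)$. On this range $\lambda t\geq 1$, so the third bullet of Lemma \ref{estijac} supplies a uniform lower bound $|1-\phi_\lambda(t)|\geq C>0$; discarding the (nonnegative) integrand over $\lambda<\tfrac1t$ and replacing $|1-\phi_\lambda(t)|^2$ by the constant lower bound yields
\begin{equation}\label{highfreq}
\int_{1/t}^\infty\int_N|\widetilde f(\lambda,n)|^2\,|c(\lambda)|^{-2}\,dn\,d\lambda\;\leq\; C\,\omega_k(t)^2.
\end{equation}
The only remaining task is to compare the Plancherel weight $|c(\lambda)|^{-2}$ with the flat measure $d\lambda\,dn$ appearing in the target inequality. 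For small $t$ we have $\tfrac1t>1$, so the asymptotic \eqref{measureconstant} gives $|c(\lambda)|^{-2}\asymp\lambda^{d-1}$ throughout the region $\lambda\geq\tfrac1t$; in particular $\lambda^{d-1}\geq (\tfrac1t)^{d-1}=t^{1-d}$ there. Dropping $|c(\lambda)|^{-2}$ down to this pointwise lower bound $c\,t^{1-d}$ inside \eqref{highfreq} and moving the constant $t^{1-d}$ outside the integral produces exactly
$$t^{1-d}\int_{1/t}^\infty\int_N|\widetilde f(\lambda,n)|^2\,dn\,d\lambda\;\leq\; C\,\omega_k(t)^2,$$
which rearranges to the claimed bound $\int_{1/t}^\infty\int_N|\widetilde f(\lambda,n)|^2\,dn\,d\lambda\leq C\,t^{d-1}\omega_k(t)^2$.

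The argument is essentially a three-step funnel: Plancherel, the spectral-gap lower bound on $1-\phi_\lambda$, and the $c$-function asymptotics. I do not expect a genuine obstacle here, since every ingredient is already quoted in the preliminaries; the only point requiring a little care is the \emph{order of magnitude} bookkeeping in the last step. One must use the correct direction of the asymptotic estimate \eqref{measureconstant}, bounding $|c(\lambda)|^{-2}$ \emph{below} by a constant multiple of $\lambda^{d-1}$ and then $\lambda^{d-1}$ below by $t^{1-d}$ on the tail $\lambda\geq\tfrac1t$; reversing either inequality would give the wrong power of $t$. It is also worth noting that the modulus-of-continuity structure of $\omega_k$ and the value of $k$ play no role in \emph{this} direction of the theorem—the hypothesis is used only through the single scalar bound $\|M_tf-f\|_2\leq C\omega_k(t)$—so no Zygmund-class properties are invoked.
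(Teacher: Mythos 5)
Your argument is correct and is essentially the paper's own proof: both rest on the same three ingredients (Plancherel plus the identity $\widetilde{M_tf}=\widetilde f\,\phi_\lambda(t)$, the lower bound $|1-\phi_\lambda(t)|\geq C$ for $\lambda t\geq 1$ from Lemma \ref{estijac}, and the asymptotic $|c(\lambda)|^{-2}\asymp\lambda^{d-1}$ for $\lambda>1$), merely chained in the opposite order — the paper starts from the target integral and bounds it upward through $\lambda^{d-1}\leq C|c(\lambda)|^{-2}$ and $1\leq C^{-2}|1-\phi_\lambda(t)|^2$, while you start from the Plancherel identity and bound downward. Your closing remarks on the direction of the $c$-function estimate and on the irrelevance of the Zygmund-class hypotheses in this direction are both accurate.
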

	
	\begin{proof}
		Using the Plancherel theorem and (\ref{FTTrans}) we have
		\begin{align*}
			\|M_tf-f\|_2^2=&\int_{\mathbb{R}_+}\int_N|\widetilde{(M_tf-f)}(\lambda,n)|^2|c(\lambda)|^{-2}\,dn\,d\lambda\\=&\int_{\mathbb{R}_+}\int_N|1-\phi_\lambda(a_t)|^2|\widetilde{f}(\lambda,n)|^2|c(\lambda)|^{-2}\,dn\,d\lambda.
		\end{align*}
		Since $f\in\mathrm{Lip}(\omega_k),$ it follows that
		\begin{align}\label{TransNorm}
			\int_{\mathbb{R}_+}\int_N|1-\phi_\lambda(a_t)|^2|\widetilde{f}(\lambda,n)|^2|c(\lambda)|^{-2}\,dn\,d\lambda\leq C^2\omega_k(t)^2.
		\end{align}
		
		Now,
		\begin{align*}
			\int_{\frac{1}{t}}^\infty\int_N|\widetilde{f}(\lambda,n)|^2\,dn\,d\lambda=&t^{d-1}\int_{\frac{1}{t}}^\infty \int_N|\widetilde{f}(\lambda,n)|^2\,dn\frac{1}{t^{d-1}}\,d\lambda\\\leq&t^{d-1}\int_{\frac{1}{t}}^\infty \int_N|\widetilde{f}(\lambda,n)|^2\,dn\lambda^{d-1}\,d\lambda.
		\end{align*}
		Since $\frac{1}{t}>1,$ then using (\ref{measureconstant}) we obtain  
		$$\int_{\frac{1}{t}}^\infty\int_N|\widetilde{f}(\lambda,n)|^2\,dn\,d\lambda\leq Ct^{d-1}\int_{\frac{1}{t}}^\infty \int_N|\widetilde{f}(\lambda,n)|^2\,dn|c(\lambda)|^{-2}\,d\lambda.$$
		
		Using Lemma \ref{estijac} we have 
		\begin{align*}
			\int_{\frac{1}{t}}^\infty\int_N|\widetilde{f}(\lambda,n)|^2\,dn\,d\lambda\leq& \frac{Ct^{d-1}}{C_1^2}\int_{\frac{1}{t}}^\infty |1-\phi_\lambda(a_t)|^2\int_N|\widetilde{f}(\lambda,n)|^2\,dn|c(\lambda)|^{-2}\,d\lambda\\\leq&\frac{Ct^{d-1}}{C_1^2}\int_0^\infty |1-\phi_\lambda(a_t)|^2\int_N|\widetilde{f}(\lambda,n)|^2\,dn|c(\lambda)|^{-2}\,d\lambda.
		\end{align*}
		Hence, by (\ref{TransNorm}) we have 
		
		\begin{align*}
			\int_{\frac{1}{t}}^\infty\int_N|\widetilde{f}(\lambda,n)|^2\,dn\,d\lambda\leq& C_2t^{d-1}\omega_k(t)^2.\qedhere
		\end{align*}
		\end{proof}
	
	We need the following lemma to prove a converse to the previous theorem.
	
	\begin{lem}\label{ConvLem}
	    	Let $\omega_k$ belongs to the Zygmund class $\mathscr{Z}^0$ and $f\in L^2(S).$ The following are equivalent. 
	    	\begin{enumerate}[(i)]
	    	    \item There exists a positive constant $C$ such that  $$\int_{\frac{1}{t}}^\infty\int_N|\widetilde{f}(\lambda,n)|^2\lambda^{d-1}\,dn\,d\lambda\leq C\omega_k(t)^2,\ t\in(0,1).$$
	    	    \item There exists a positive constant $C$ such that $$\int_{\frac{1}{t}}^{\frac{2}{t}}\int_N|\widetilde{f}(\lambda,n)|^2\lambda^{d-1}\,dn\,d\lambda\leq C\omega_k(t)^2,\ t\in(0,1).$$
	    	\end{enumerate}
	\end{lem}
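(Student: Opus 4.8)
The plan is to prove the equivalence of the two integrability conditions by a standard dyadic decomposition argument. The implication $(i)\Rightarrow(ii)$ is immediate, since the integral in $(ii)$ over the dyadic annulus $[\frac{1}{t},\frac{2}{t}]$ is dominated by the integral in $(i)$ over the tail $[\frac{1}{t},\infty)$; the integrand is nonnegative, so monotonicity of the integral gives the bound with the same constant. I would state this in one line.

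The substance lies in $(ii)\Rightarrow(i)$. First I would fix $t\in(0,1)$ and split the tail $[\frac{1}{t},\infty)$ into a union of dyadic blocks. Writing $F(\lambda):=\int_N|\widetilde{f}(\lambda,n)|^2\lambda^{d-1}\,dn$, I decompose
\begin{align*}
\int_{\frac{1}{t}}^\infty F(\lambda)\,d\lambda=\sum_{j=0}^\infty\int_{\frac{2^j}{t}}^{\frac{2^{j+1}}{t}}F(\lambda)\,d\lambda.
\end{align*}
For each $j$, I set $t_j:=\frac{t}{2^j}\in(0,1)$, so that the $j$th block is exactly $\int_{\frac{1}{t_j}}^{\frac{2}{t_j}}F(\lambda)\,d\lambda$, which by hypothesis $(ii)$ is at most $C\omega_k(t_j)^2=C\omega_k(t/2^j)^2$. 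Summing, I obtain $\int_{\frac{1}{t}}^\infty F(\lambda)\,d\lambda\leq C\sum_{j=0}^\infty\omega_k(t/2^j)^2$, so the whole problem reduces to showing $\sum_{j=0}^\infty\omega_k(t/2^j)^2\leq C'\omega_k(t)^2$.

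The key step, and the place where the Zygmund hypothesis $\omega_k\in\mathscr{Z}^0$ enters, is estimating this geometric-type sum by the quantity $\omega_k(t)^2$. Here I would compare the sum with the integral $\int_0^t\frac{\omega_k(s)^2}{s}\,ds$: on each interval $[t/2^{j+1},t/2^j]$, the almost-increasing property of $\omega_k$ (so $\omega_k(t/2^{j+1})\leq C\,\omega_k(s)$ for $s$ in that interval) together with the fact that $\int_{t/2^{j+1}}^{t/2^j}\frac{ds}{s}=\ln 2$ yields $\omega_k(t/2^{j+1})^2\leq \frac{C}{\ln 2}\int_{t/2^{j+1}}^{t/2^j}\frac{\omega_k(s)^2}{s}\,ds$; summing the dyadic pieces recovers $\int_0^t\frac{\omega_k(s)^2}{s}\,ds$ up to a constant, and adding the $j=0$ term separately handles the top endpoint. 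I would then invoke the defining inequality of $\mathscr{Z}^0$. A technical caveat I expect to be the main obstacle is that the $\mathscr{Z}^0$ condition as stated controls $\int_0^t\frac{\omega_k(s)}{s}\,ds$, the first power of $\omega_k$, whereas the natural comparison above produces $\int_0^t\frac{\omega_k(s)^2}{s}\,ds$; I would reconcile this by noting that $\omega_k^2$ is itself an admissible modulus of continuity (the product of two almost-increasing functions is almost increasing, and $\omega_k(t)^2/t^{2k}$ is almost decreasing) lying in the same Zygmund class, or alternatively by carrying out the dyadic comparison directly on $\omega_k^2$ using only the almost-monotonicity properties and the Zygmund-type bound applied to $\omega_k^2$. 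This reduction of the tail integral to a single $\omega_k(t)^2$ via the $\mathscr{Z}^0$ summability is the crux of the argument; once it is in place, the lemma follows.
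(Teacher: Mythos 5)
Your proof is correct, and its first half --- the dyadic decomposition of the tail into the blocks $[\tfrac{2^j}{t},\tfrac{2^{j+1}}{t}]$ and the reduction of $(ii)\Rightarrow(i)$ to the estimate $\sum_{j\ge 0}\omega_k(t/2^j)^2\le C'\omega_k(t)^2$ --- is exactly what the paper does. Where you genuinely diverge is in how this series is summed. The paper invokes the lower Matuszewska--Orlicz index: by \cite[Theorem 2.10]{KMRS}, the condition $\omega_k\in\mathscr{Z}^0$ forces $\omega_k(s)/s^{\mu}$ to be almost increasing for some $\mu\in(0,m(\omega_k))$, whence $\omega_k(t/2^j)\le C\,2^{-j\mu}\omega_k(t)$ and the series converges geometrically. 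You instead compare the series with $\int_0^t\omega_k(s)^2 s^{-1}\,ds$ via almost-monotonicity on each dyadic interval and then need the Zygmund inequality for $\omega_k^2$ rather than for $\omega_k$. That is a legitimate route, but the bridge you should make explicit is the one-line bound
\begin{equation*}
\int_0^t\frac{\omega_k(s)^2}{s}\,ds\;\le\;C\,\omega_k(t)\int_0^t\frac{\omega_k(s)}{s}\,ds\;\le\;C'\,\omega_k(t)^2,
\end{equation*}
which uses the almost-increasingness of $\omega_k$ together with the defining inequality of $\mathscr{Z}^0$; your stated justification (that $\omega_k^2$ is an admissible modulus of continuity of order $2k$) does not by itself place $\omega_k^2$ in $\mathscr{Z}^0$. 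With that line inserted your argument is complete, and it is in fact more self-contained than the paper's, since it uses only the definition of $\mathscr{Z}^0$ and avoids citing the MO-index theorem; the paper's route buys the slightly stronger quantitative fact that the dyadic terms decay geometrically in $j$.
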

	\begin{proof}
	    (i) implies (ii) is clear. Now, assume that (ii) holds.	Then, for $i\geq 0,$ we get $$\int_{\frac{2^i}{t}}^{\frac{2^{i+1}}{t}}\int_N|\widetilde{f}(\lambda,n)|^2\lambda^{d-1}\,dn\,d\lambda\leq C\omega_k\left(\frac{t}{2^i}\right)^2.$$
		Therefore,
		\begin{align*}
			\int_{\frac{1}{t}}^\infty\int_N|\widetilde{f}(\lambda,n)|^2\lambda^{d-1}\,dn\,d\lambda=&\sum_{i=0}^\infty\int_{\frac{2^i}{t}}^{\frac{2^{i+1}}{t}}\int_N|\widetilde{f}(\lambda,n)|^2\lambda^{d-1}\,dn\,d\lambda\\\leq&C\sum_{i=0}^\infty\omega_k\left(\frac{t}{2^i}\right)^2.
		\end{align*}
		Let $\mu>0$ be such that $\mu<m(\omega_k).$ Here, $m(\omega_k)$ is the lower MO index \cite[Pg. 31]{KMRS}. Since $\omega_k$ belongs to the Zygmund class $\mathscr{Z}^0,$ it follows by \cite[Theorem 2.10]{KMRS} that $\frac{\omega_k(t)}{t^\mu}$ is almost increasing.
		Since $\frac{t}{2^i}\leq t,\ i\geq 0,$ it follows by the definition of almost increasing that $$\frac{\omega_k(\frac{t}{2^i})}{(\frac{t}{2^i})^\mu}\leq C_1\frac{\omega_k(t)}{t^\mu}.$$
		Therefore, we have 
		\begin{align*}
			\int_{\frac{1}{t}}^\infty\int_N|\widetilde{f}(\lambda,n)|^2\lambda^{d-1}\,dn\,d\lambda\leq&C_2\sum_{i=0}^\infty\left(\left(\frac{t}{2^i}\right)^\mu\frac{\omega_k(t)}{t^\mu}\right)^2\\=&C_2\omega_k(t)^2\sum_{i=0}^\infty\left(\frac{1}{2^i}\right)^{2\mu}.
		\end{align*}
		Hence, we have 
		\begin{align*}
			\int_{\frac{1}{t}}^\infty\int_N|\widetilde{f}(\lambda,n)|^2\lambda^{d-1}\,dn\,d\lambda\leq&C_3\omega_k(t)^2.\qedhere
		\end{align*}
		\end{proof}
		
	The following theorem is a converse to Theorem \ref{TitOP} under more assumptions.
	
	\begin{thm}\label{ConverseTitOP}
		Let $k\leq 2$ and assume that $\omega_k$ belongs to the Zygmund classes $\mathscr{Z}^0$ and $\mathscr{Z}_k.$ Let $\omega_k(t)$ be bounded below by a positive number on interval $[\delta_0,\infty)$ and let $\frac{\omega_k(t)^2}{t^5}\in L^1([\delta_0,\infty)).$ For a function $f\in L^2(S),$ if there exists a positive constant $C$ such that for all sufficiently small $t\in (0,1),$ we have
		\begin{align}\label{FTcondi}
			\int_{\frac{1}{t}}^\infty\int_N|\widetilde{f}(\lambda,n)|^2\,dn\,d\lambda\leq Ct^{d-1}\omega_k(t)^2,
		\end{align} then $f\in\mathrm{Lip}(\omega_k).$   
	\end{thm}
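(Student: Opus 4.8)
The plan is to reverse the computation that opened the proof of Theorem \ref{TitOP}. By the Plancherel theorem and \eqref{FTTrans},
$$\|M_tf-f\|_2^2=\int_0^\infty|1-\phi_\lambda(a_t)|^2\Big(\int_N|\widetilde f(\lambda,n)|^2\,dn\Big)|c(\lambda)|^{-2}\,d\lambda,$$
so the entire task is to bound this by $C\omega_k(t)^2$ for small $t$. First I would upgrade the hypothesis \eqref{FTcondi} into a weighted tail estimate: on the dyadic block $[\tfrac1t,\tfrac2t]$ one has $\lambda^{d-1}\le 2^{d-1}t^{-(d-1)}$, so \eqref{FTcondi} instantly yields condition (ii) of Lemma \ref{ConvLem}; since $\omega_k\in\mathscr Z^0$, Lemma \ref{ConvLem} then gives
$$\Phi(u):=\int_u^\infty\int_N|\widetilde f(\lambda,n)|^2\lambda^{d-1}\,dn\,d\lambda\le C\,\omega_k(1/u)^2$$
for all large $u$ (equivalently, the bound $\int_{1/t}^\infty\int_N|\widetilde f|^2\lambda^{d-1}\,dn\,d\lambda\le C\omega_k(t)^2$ for small $t$).

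Next I would split the $\lambda$-integral as $\int_0^1+\int_1^{1/t}+\int_{1/t}^\infty$, matching the two regimes of \eqref{measureconstant} and the natural cut-off $\lambda t=1$ from Lemma \ref{estijac}. On $[\tfrac1t,\infty)$ I use the first bullet of Lemma \ref{estijac}, $|1-\phi_\lambda(a_t)|\le 2$, together with $|c(\lambda)|^{-2}\asymp\lambda^{d-1}$, so this piece is controlled directly by $\Phi(1/t)\le C\omega_k(t)^2$. On $[0,1]$ I use the second bullet, $|1-\phi_\lambda(a_t)|^2\le Ct^4(\lambda^2+1)^2\le Ct^4$, and $|c(\lambda)|^{-2}\asymp\lambda^2$, bounding the piece by $Ct^4\|f\|_2^2$; because $\tfrac{\omega_k(t)}{t^k}$ is almost decreasing and $k\le 2$, one has $t^4\le t^{2k}\le C\omega_k(t)^2$, which absorbs it. This is the first place where $k\le 2$ is essential.

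The heart of the matter is the middle range $[1,\tfrac1t]$, where the second bullet gives $|1-\phi_\lambda(a_t)|^2\le Ct^4\lambda^4$ and $|c(\lambda)|^{-2}\asymp\lambda^{d-1}$, leaving
$$Ct^4\int_1^{1/t}\lambda^4\Big(\int_N|\widetilde f(\lambda,n)|^2\lambda^{d-1}\,dn\Big)\,d\lambda.$$
I would integrate by parts against $-\Phi'$, discard the nonpositive boundary term at $\lambda=\tfrac1t$, keep the finite boundary term at $\lambda=1$, and estimate the remaining $\int_1^{1/t}4\lambda^3\Phi(\lambda)\,d\lambda$ using $\Phi(\lambda)\le C\omega_k(1/\lambda)^2$; the substitution $s=1/\lambda$ then collapses the whole middle piece to $Ct^4\int_t^1\tfrac{\omega_k(s)^2}{s^5}\,ds$.

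The final and most delicate step is the weighted Zygmund inequality $\int_t^1\tfrac{\omega_k(s)^2}{s^5}\,ds\le C\tfrac{\omega_k(t)^2}{t^4}$. For the principal range $[t,\delta_0]$ I would factor $\tfrac{\omega_k(s)^2}{s^5}=\tfrac{\omega_k(s)}{s^{1+k}}\cdot\tfrac{\omega_k(s)}{s^{4-k}}$; since $k\le 2$ the second factor $\tfrac{\omega_k(s)}{s^{4-k}}=\tfrac{\omega_k(s)}{s^k}\,s^{2k-4}$ is a product of an almost decreasing function with a nonincreasing power, hence almost decreasing, so it pulls out as $C\tfrac{\omega_k(t)}{t^{4-k}}$, and the $\mathscr Z_k$ condition controls $\int_t^{\delta_0}\tfrac{\omega_k(s)}{s^{1+k}}\,ds\le C\tfrac{\omega_k(t)}{t^k}$, producing exactly $C\tfrac{\omega_k(t)^2}{t^4}$. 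The residual range $[\delta_0,1]$ contributes only a finite constant, guaranteed by the hypothesis $\tfrac{\omega_k(t)^2}{t^5}\in L^1([\delta_0,\infty))$, with the lower bound on $\omega_k$ over $[\delta_0,\infty)$ ensuring that the estimate of Lemma \ref{ConvLem} and the bound $\Phi(\lambda)\le C\omega_k(1/\lambda)^2$ remain meaningful for the moderate values of $\lambda$; after multiplication by the prefactor, $t^4\le C\omega_k(t)^2$ absorbs this constant as well. Summing the three pieces yields $\|M_tf-f\|_2^2\le C\omega_k(t)^2$, that is, $f\in\mathrm{Lip}(\omega_k)$. I expect this last inequality to be the main obstacle, since it is precisely where the three standing assumptions $k\le 2$, $\omega_k\in\mathscr Z_k$, and the behaviour of $\omega_k$ on $[\delta_0,\infty)$ are used together.
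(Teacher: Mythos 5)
Your proposal is correct and follows essentially the same route as the paper's proof: the same upgrade of \eqref{FTcondi} via Lemma \ref{ConvLem}, the same integration by parts against the tail function, and the same factorization $\frac{\omega_k(s)^2}{s^5}=\frac{\omega_k(s)}{s^{1+k}}\cdot\frac{\omega_k(s)}{s^{4-k}}$ combined with the $\mathscr{Z}_k$ condition and the absorption $t^4\leq C\omega_k(t)^2$. The only differences are cosmetic: you split the low frequencies into $[0,1]\cup[1,\tfrac{1}{t}]$ where the paper keeps $[0,\tfrac{1}{t}]$ intact and handles the constant part of $(4\lambda^2+\tfrac{Q^2}{4})^2$ by Plancherel, and you weight the auxiliary tail function by $\lambda^{d-1}$ rather than $|c(\lambda)|^{-2}$.
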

	
	\begin{proof}
		Using (\ref{FTcondi}) we have
		$$\int_{\frac{1}{t}}^{\frac{2}{t}}\int_N|\widetilde{f}(\lambda,n)|^2\lambda^{d-1}\,dn\,d\lambda\leq\left(\frac{2}{t}\right)^{d-1}\int_{\frac{1}{t}}^\infty\int_N|\widetilde{f}(\lambda,n)|^2\,dn\,d\lambda\leq C_1\omega_k(t)^2.$$
		Then, by Lemma \ref{ConvLem} we have 
		\begin{align}\label{FTesti}
			\int_{\frac{1}{t}}^\infty\int_N|\widetilde{f}(\lambda,n)|^2\lambda^{d-1}\,dn\,d\lambda\leq&C_2\omega_k(t)^2.
		\end{align}
		
		As in the proof of Theorem \ref{TitOP}, using the Plancherel theorem and (\ref{FTTrans}) we get
		\begin{align}\label{norm=I1+I2}
			\|M_tf-f\|_2^2=&\int_{\mathbb{R}_+}\int_N|1-\phi_\lambda(a_t)|^2|\widetilde{f}(\lambda,n)|^2|c(\lambda)|^{-2}\,dn\,d\lambda=I_1+I_2.
		\end{align}
		Here, $$I_1=\int_0^{\frac{1}{t}}\int_N|1-\phi_\lambda(a_t)|^2|\widetilde{f}(\lambda,n)|^2|c(\lambda)|^{-2}\,dn\,d\lambda$$ and $$I_2=\int_{\frac{1}{t}}^\infty\int_N|1-\phi_\lambda(a_t)|^2|\widetilde{f}(\lambda,n)|^2|c(\lambda)|^{-2}\,dn\,d\lambda.$$
		
		First, we estimate $I_2.$ Since $\frac{1}{t}>1,$ using (\ref{measureconstant}) and Lemma \ref{estijac} we have
		$$I_2\leq 2^2C\int_{\frac{1}{t}}^\infty\int_N|\widetilde{f}(\lambda,n)|^2\lambda^{d-1}\,dn\,d\lambda.$$
		Thus, applying (\ref{FTesti}) we get
		\begin{align}\label{I2}
			I_2\leq 4C_3\omega_k(t)^2.
		\end{align}
		
		Now, we estimate $I_1.$ Using Lemma \ref{estijac} we have
		\begin{align*}
			I_1\leq&\int_0^{\frac{1}{t}}\int_N\left(\frac{t^2}{2}\left(4\lambda^2+\frac{Q^2}{4}\right)\right)^2|\widetilde{f}(\lambda,n)|^2|c(\lambda)|^{-2}\,dn\,d\lambda\\\leq&C_4t^4\int_0^{\frac{1}{t}}\int_N\lambda^4|\widetilde{f}(\lambda,n)|^2|c(\lambda)|^{-2}\,dn\,d\lambda\\&+C_5t^4\int_0^{\frac{1}{t}}\int_N|\widetilde{f}(\lambda,n)|^2|c(\lambda)|^{-2}\,dn\,d\lambda.
		\end{align*}
		By the Plancherel theorem in the second term, we get
		
		$$I_1\leq C_4t^4\int_0^{\frac{1}{t}}\int_N\lambda^4|\widetilde{f}(\lambda,n)|^2|c(\lambda)|^{-2}\,dn\,d\lambda+C_6t^4.$$
		
	Denote $\psi(s)=\int_s^\infty\int_N|\widetilde{f}(\lambda,n)|^2|c(\lambda)|^{-2}\,dn\,d\lambda,\ s\in\mathbb{R}_+.$ By the Plancherel theorem, $\psi(s)$ is bounded on $\mathbb{R}_+.$ Then, using the assumption that $\omega_k(s)$ is bounded below by a positive number on $[\delta_0,\infty),$ and (\ref{FTesti}), we have $\psi(s)\leq C_3\omega_k\left(\frac{1}{s}\right)^2,\ s\in\mathbb{R}_+.$ Therefore,
		
		\begin{align*}
			I_1\leq& C_4t^4\int_0^{\frac{1}{t}}\lambda^4(-\psi'(\lambda))\,d\lambda+C_6t^4\\=&C_4t^4\left(-\frac{1}{t^4}\psi\left(\frac{1}{t}\right)+\int_0^{\frac{1}{t}}4\lambda^3\psi(\lambda)\,d\lambda\right)+C_6t^4\\\leq&C_4t^4\int_0^{\frac{1}{t}}4\lambda^3\psi(\lambda)\,d\lambda+C_6t^4\\\leq&C_7t^4\int_0^{\frac{1}{t}}\lambda^3\omega_k\left(\frac{1}{\lambda}\right)^2\,d\lambda+C_6t^4\\=&C_7t^4\int_t^\infty\frac{\omega_k(\lambda)^2}{\lambda^5}\,d\lambda+C_6t^4.
		\end{align*}
		Using the assumption $\frac{\omega_k(t)^2}{t^5}\in L^1([\delta_0,\infty)),$ we get
		\begin{align*}
			I_1\leq&C_7t^4\int_t^{\delta_0}\frac{\omega_k(\lambda)^2}{\lambda^5}\,d\lambda+C_8t^4.
		\end{align*}
		Since $k\leq 2$ then $4-k\geq k.$ Therefore, $\frac{w_k(\lambda)}{\lambda^{4-k}}$ is almost decreasing. Then, we have 
		\begin{align*}
			I_1\leq&C_7t^4\frac{\omega_k(t)}{t^{4-k}}\int_t^{\delta_0}\frac{\omega_k(\lambda)}{\lambda^{k+1}}\,d\lambda+C_8t^4.
		\end{align*}
		Using the assumption that $\omega_k$ belongs to the Zygmund class $\mathscr{Z}_k$ in the first term and using the the fact that $k\leq 2$ implies that $\frac{w_k(t)}{t^2}$ is almost decreasing which implies that $t^2\leq C\omega_k(t)$ in the second term, it follows that
		\begin{align}\label{I1}
			I_1\leq&C_9\omega_k(t)^2.
		\end{align}
		
		Hence, the result follows form (\ref{norm=I1+I2}), (\ref{I1}) and (\ref{I2}).
	\end{proof}
	
	The following result is the main result of the paper and it is a direct consequence of Theorem \ref{TitOP} and Theorem \ref{ConverseTitOP}.
	
	\begin{cor}\label{TitThm}
		Let $k\leq 2$ and assume that $\omega_k$ belongs to the Zygmund classes $\mathscr{Z}^0$ and $\mathscr{Z}_k.$ Let $\omega_k(t)$ be bounded below by a positive number on interval $[\delta_0,\infty)$ and let $\frac{\omega_k(t)^2}{t^5}\in L^1([\delta_0,\infty)).$ A function $f\in L^2(S)$ belongs to the generalised Lipschitz class $\mathrm{Lip}(\omega_k)$ if and only if there exists a positive constant $C$ such that for all sufficiently small $t\in (0,1),$ we have
		\begin{align*}
			\int_{\frac{1}{t}}^\infty\int_N|\widetilde{f}(\lambda,n)|^2\,dn\,d\lambda\leq Ct^{d-1}\omega_k(t)^2.
		\end{align*}
	\end{cor}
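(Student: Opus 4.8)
The final statement to prove is Corollary \ref{TitThm}, which asserts the two-sided equivalence characterizing the generalised Lipschitz class $\mathrm{Lip}(\omega_k)$ via decay of the Helgason Fourier transform.

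\medskip

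The plan is to obtain the corollary as an immediate consequence of the two theorems already established, so no genuinely new argument is required: the work reduces to observing that the forward and reverse implications are precisely Theorem \ref{TitOP} and Theorem \ref{ConverseTitOP}. First I would prove the forward direction. Assume $f\in\mathrm{Lip}(\omega_k)$. Since $\omega_k$ is a $k$th-order modulus of continuity (this is built into the hypothesis via the Zygmund-class assumptions), Theorem \ref{TitOP} applies directly and yields a positive constant $C$ such that $\int_{1/t}^\infty\int_N|\widetilde{f}(\lambda,n)|^2\,dn\,d\lambda\leq Ct^{d-1}\omega_k(t)^2$ for all sufficiently small $t\in(0,1)$. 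This is exactly the asserted Fourier-decay estimate, so the ``only if'' part is done.

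\medskip

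Next I would prove the reverse direction. Suppose $f\in L^2(S)$ satisfies the decay estimate $\int_{1/t}^\infty\int_N|\widetilde{f}(\lambda,n)|^2\,dn\,d\lambda\leq Ct^{d-1}\omega_k(t)^2$ for all sufficiently small $t$. The hypotheses of the corollary---that $k\leq 2$, that $\omega_k$ lies in both Zygmund classes $\mathscr{Z}^0$ and $\mathscr{Z}_k$, that $\omega_k$ is bounded below by a positive number on $[\delta_0,\infty)$, and that $\frac{\omega_k(t)^2}{t^5}\in L^1([\delta_0,\infty))$---are exactly the standing assumptions of Theorem \ref{ConverseTitOP}. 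Hence that theorem applies verbatim and gives $f\in\mathrm{Lip}(\omega_k)$, establishing the ``if'' part. Combining the two implications proves the stated equivalence.

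\medskip

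I do not anticipate any obstacle here, since the corollary is a pure conjunction of the two theorems with matching hypotheses; the only point meriting a line of care is verifying that the hypothesis lists coincide. In particular, Theorem \ref{TitOP} needs only that $\omega_k$ be a $k$th-order modulus of continuity, which is subsumed by membership in $\mathscr{Z}^0\cap\mathscr{Z}_k$, while Theorem \ref{ConverseTitOP} requires the full set of extra assumptions; since the corollary imposes all of them, both directions are available simultaneously. The proof therefore consists of a single sentence invoking the two results.

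\begin{proof}
	If $f\in\mathrm{Lip}(\omega_k)$, then the decay estimate follows immediately from Theorem \ref{TitOP}. Conversely, under the stated hypotheses on $\omega_k$ and $k$, the decay estimate implies $f\in\mathrm{Lip}(\omega_k)$ by Theorem \ref{ConverseTitOP}. This proves the equivalence.
\end{proof}
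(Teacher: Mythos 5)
Your proof is correct and matches the paper exactly: the paper itself states the corollary as a direct consequence of Theorem \ref{TitOP} (forward direction) and Theorem \ref{ConverseTitOP} (converse), which is precisely your argument. The only cosmetic quibble is that $\omega_k$ being a $k$th-order modulus of continuity is the paper's standing notational convention rather than something derived from the Zygmund-class hypotheses, but this does not affect the validity of your proof.
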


	If we take $\omega_2(t)=t^\alpha\left(\ln{\frac{1}{t}}\right)^\gamma,$ where $\alpha\in (0,2), \gamma\in\mathbb{R}$ and $t\in[0,
	\delta_0],$ and $\omega_2$ is constant on $[\delta_0,\infty)$, then as a special case of Corollary \ref{TitThm} we obtain the following result.
	
	\begin{cor}\label{LipCor}
		Let $\alpha\in (0,2), \gamma\in\mathbb{R}$ and $f\in L^2(S).$ Then, we have $$\|M_tf-f\|_2\leq C_1t^\alpha\left(\ln{\frac{1}{t}}\right)^\gamma,$$ for all sufficiently small $t\in (0,1)$ if and only if 
		$$\int_{\frac{1}{t}}^\infty\int_N|\widetilde{f}(\lambda,n)|^2\,dn\,d\lambda\leq C_2t^{2\alpha+d-1}\left(\ln{\frac{1}{t}}\right)^{2\gamma},$$  for all sufficiently small $t\in (0,1).$
	\end{cor}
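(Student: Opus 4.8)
The plan is to derive this corollary purely as a specialization of Corollary \ref{TitThm}, so that the entire argument reduces to verifying that the concrete weight $\omega_2(t) = t^\alpha(\ln\frac{1}{t})^\gamma$ on $[0,\delta_0]$, extended to be constant on $[\delta_0,\infty)$, meets every hypothesis of that corollary. Once the hypotheses are checked, substituting $\omega_2(t)^2 = t^{2\alpha}(\ln\frac{1}{t})^{2\gamma}$ into the two equivalent estimates of Corollary \ref{TitThm} gives exactly the stated pair of inequalities, since $t^{d-1}\omega_2(t)^2 = t^{2\alpha + d - 1}(\ln\frac{1}{t})^{2\gamma}$, while $\|M_tf-f\|_2\leq C\omega_2(t)$ is precisely the defining condition of membership in $\mathrm{Lip}(\omega_2)$.

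First I would confirm that $\omega_2$ is a genuine second-order modulus of continuity on $[0,\delta_0]$. Choosing $\delta_0\in(0,1)$ so that $\ln\frac{1}{t}>0$ on $(0,\delta_0]$, I note that $\omega_2(t)\to 0$ as $t\to 0^+$ (the factor $t^\alpha$ dominates the slowly varying $(\ln\frac{1}{t})^\gamma$), giving $\omega_2(0)=0$; that $\omega_2$ is almost increasing near $0$; and, crucially, that $\omega_2(t)/t^2 = t^{\alpha-2}(\ln\frac{1}{t})^\gamma$ is almost decreasing because $\alpha-2<0$. This is exactly where the hypothesis $\alpha\in(0,2)$, i.e.\ $k=2$, enters.

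Next I would record that $\omega_2$ lies in the Zygmund--Bari--Stechkin class $\mathscr{Z}^0\cap\mathscr{Z}_2$. This is precisely the first of the classical examples listed in the Preliminaries (with exponent $\alpha$ in the admissible range $(0,k)=(0,2)$ and arbitrary $\gamma\in\mathbb{R}$), so I would cite \cite{KMRS} rather than recompute the two Zygmund integrals. Finally, the two conditions on $[\delta_0,\infty)$ are immediate from the constant extension: there $\omega_2$ equals the positive constant $\delta_0^\alpha(\ln\frac{1}{\delta_0})^\gamma$, so it is bounded below by a positive number, and $\omega_2(t)^2/t^5$ is a constant multiple of $t^{-5}$, which is integrable on $[\delta_0,\infty)$.

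With all hypotheses verified, Corollary \ref{TitThm} applies verbatim and yields the claimed equivalence after the substitution above. I do not expect any genuine obstacle: the only mildly technical point is checking the modulus-of-continuity axioms and the two Zygmund memberships for the log--power weight near $t=0$, both of which are classical facts available from \cite{KMRS}, while the boundary behaviour on $[\delta_0,\infty)$ is trivial by construction.
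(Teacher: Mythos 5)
Your proposal is correct and follows exactly the paper's route: the paper also obtains this corollary purely as the specialization of Corollary \ref{TitThm} to $\omega_2(t)=t^\alpha(\ln\frac{1}{t})^\gamma$ extended constantly on $[\delta_0,\infty)$, merely without writing out the hypothesis checks that you supply. Your verification of the modulus axioms, the Zygmund--Bari--Stechkin membership via the listed classical example, and the trivial conditions on $[\delta_0,\infty)$ is all accurate and fills in what the paper leaves implicit.
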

	
	The following result is an analogue of \cite[Theorem 1.5 (A)]{J} which gives decay properties of the Helgason Fourier transform for functions in the generalized Besov spaces.  
	
\begin{thm}
    Let $\alpha>0$ and $f\in L^2(S).$ If
    \begin{align}\label{TransC}
        \int_0^1\left(\frac{\|M_tf-f\|_2}{t^\alpha}\right)^2\,\frac{dt}{t}<\infty,
    \end{align} then 
    \begin{align}\label{FTC}
    \int_0^\infty\int_{\lambda=t}^{2t}\int_N\left(\frac{|\widetilde{f}(\lambda,n)|}{\lambda^{-\alpha}}\right)^2\,dn\,|c(\lambda)|^{-2}d\lambda\,\frac{dt}{t}<\infty.    
    \end{align}
    \end{thm}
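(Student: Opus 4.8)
The plan is to collapse both sides to single integrals in $\lambda$ against the Plancherel density and then compare the two integrands pointwise for large $\lambda$. Throughout I would write $F(\lambda):=\int_N|\widetilde{f}(\lambda,n)|^2\,dn\,|c(\lambda)|^{-2}$, so that by the Plancherel theorem $\int_0^\infty F(\lambda)\,d\lambda$ is a constant multiple of $\|f\|_2^2$ and in particular $F\in L^1(\mathbb{R}_+).$

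First I would simplify the conclusion (\ref{FTC}). Its integrand is $\int_t^{2t}\lambda^{2\alpha}F(\lambda)\,d\lambda$ weighted by $\frac{dt}{t}$, and since the integrand is nonnegative, Tonelli's theorem converts the region $\{\,t\le\lambda\le 2t\,\}$ into $\{\,\lambda/2\le t\le\lambda\,\}$, where $\int_{\lambda/2}^{\lambda}\frac{dt}{t}=\ln 2.$ Hence (\ref{FTC}) equals $\ln 2\int_0^\infty\lambda^{2\alpha}F(\lambda)\,d\lambda$, so it suffices to prove $\int_0^\infty\lambda^{2\alpha}F(\lambda)\,d\lambda<\infty.$ Next I would unfold the hypothesis (\ref{TransC}): exactly as in the proof of Theorem \ref{TitOP}, Plancherel together with (\ref{FTTrans}) gives $\|M_tf-f\|_2^2=\int_0^\infty|1-\phi_\lambda(a_t)|^2F(\lambda)\,d\lambda$, so a second application of Tonelli turns (\ref{TransC}) into
$$\int_0^\infty F(\lambda)\,\Phi(\lambda)\,d\lambda<\infty,\qquad \Phi(\lambda):=\int_0^1\frac{|1-\phi_\lambda(a_t)|^2}{t^{2\alpha+1}}\,dt.$$

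The key step, and the only place the geometry enters, is a lower bound $\Phi(\lambda)\gtrsim\lambda^{2\alpha}$ for large $\lambda$. Here I would invoke the third bullet of Lemma \ref{estijac}: there is $C>0$ with $|1-\phi_\lambda(a_t)|\ge C$ whenever $\lambda t\ge 1.$ Discarding the contribution of $t<1/\lambda$ and computing $\int_{1/\lambda}^1 t^{-2\alpha-1}\,dt=\frac{1}{2\alpha}(\lambda^{2\alpha}-1)$ yields, for $\lambda\ge\lambda_0:=2^{1/(2\alpha)}$ (so that $\lambda^{2\alpha}-1\ge\tfrac12\lambda^{2\alpha}$), the estimate $\Phi(\lambda)\ge\frac{C^2}{4\alpha}\lambda^{2\alpha}.$ Consequently $\int_{\lambda_0}^\infty\lambda^{2\alpha}F(\lambda)\,d\lambda\le\frac{4\alpha}{C^2}\int_{\lambda_0}^\infty F(\lambda)\Phi(\lambda)\,d\lambda<\infty$ by the unfolded hypothesis.

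For the remaining range I would argue trivially: on $[0,\lambda_0]$ one has $\lambda^{2\alpha}\le\lambda_0^{2\alpha}$, whence $\int_0^{\lambda_0}\lambda^{2\alpha}F(\lambda)\,d\lambda\le\lambda_0^{2\alpha}\int_0^\infty F(\lambda)\,d\lambda<\infty$ using $F\in L^1$ from Plancherel. Adding the two ranges gives $\int_0^\infty\lambda^{2\alpha}F(\lambda)\,d\lambda<\infty$, which by the first reduction is exactly (\ref{FTC}). The main obstacle is thus confined to the lower estimate on $\Phi$, i.e.\ extracting the sharp power $\lambda^{2\alpha}$ from the elementary bound $|1-\phi_\lambda(a_t)|\ge C$ on $\{\lambda t\ge 1\}$; the rest is bookkeeping with Tonelli and Plancherel.
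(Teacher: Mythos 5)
Your proof is correct, but it follows a genuinely different route from the paper's. The paper splits the outer $t$-integral in \eqref{FTC} at $t=\tfrac12$: the piece $t\in(0,\tfrac12)$ is handled by the crude bound $\lambda^{2\alpha}\le(2t)^{2\alpha}$ plus Plancherel, while the piece $t\in(\tfrac12,\infty)$ is converted by the substitution $t=\tfrac{1}{2s}$ into an integral over $\lambda\in[\tfrac{1}{2s},\tfrac1s]$ and then controlled by the restriction-type estimate of \cite[Theorem 4.7 (a)]{KRS}, which bounds $\int_{\lambda s\le 1}\min\{1,(\lambda s)^2\}\int_N|\widetilde f|^2\,dn\,|c(\lambda)|^{-2}\,d\lambda$ by $\|M_sf-f\|_2^2$. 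You instead use Tonelli twice to collapse both sides onto the single weight comparison $\Phi(\lambda)\gtrsim\lambda^{2\alpha}$, which you extract from the elementary lower bound $|1-\phi_\lambda(t)|\ge C$ on $\{\lambda t\ge 1\}$ (the third item of Lemma \ref{estijac}), handling small $\lambda$ by $F\in L^1$. Your argument is more self-contained: the paper's use of the external restriction theorem is needed precisely because its change of variables lands in the regime $\lambda s\in[\tfrac12,1]$, where the elementary bound $|1-\phi_\lambda|\ge C$ is not directly available, whereas your choice of the window $t\in[1/\lambda,1]$ stays entirely inside the region where Lemma \ref{estijac} applies. What the paper's route buys in exchange is that the same decomposition and citation pattern extends to the $L^p$ setting (as in the last theorem of the paper), where a pointwise Plancherel unfolding is unavailable; in the present $L^2$ case your version is the cleaner one. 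All of your Tonelli applications are justified by nonnegativity, and the constants ($\int_{\lambda/2}^{\lambda}\frac{dt}{t}=\ln 2$, $\int_{1/\lambda}^{1}t^{-2\alpha-1}\,dt=\frac{\lambda^{2\alpha}-1}{2\alpha}$) check out.
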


\begin{proof}
    Let $f\in L^2(S)$ be such that (\ref{TransC}) holds. We decompose $$\int_0^\infty\int_{\lambda=t}^{2t}\int_N\left(\frac{|\widetilde{f}(\lambda,n)|}{\lambda^{-\alpha}}\right)^2\,dn\,|c(\lambda)|^{-2}d\lambda\,\frac{dt}{t}=I_1+I_2,$$ where $$I_1=\int_0^{\frac{1}{2}}\int_{\lambda=t}^{2t}\int_N\left(\frac{|\widetilde{f}(\lambda,n)|}{\lambda^{-\alpha}}\right)^2\,dn\,|c(\lambda)|^{-2}d\lambda\,\frac{dt}{t}$$ and $$I_2=\int_{\frac{1}{2}}^\infty\int_{\lambda=t}^{2t}\int_N\left(\frac{|\widetilde{f}(\lambda,n)|}{\lambda^{-\alpha}}\right)^2\,dn\,|c(\lambda)|^{-2}d\lambda\,\frac{dt}{t}.$$
    
    First, we estimate $I_1.$ Using the Plancherel theorem we have
    \begin{align*}
        I_1\leq&C_1\int_0^{\frac{1}{2}}t^{2\alpha}\int_{\lambda=t}^{2t}\int_N|\widetilde{f}(\lambda,n)|^2\,dn\,|c(\lambda)|^{-2}d\lambda\,\frac{dt}{t}\\\leq&C_2\|f\|_2^2\int_0^{\frac{1}{2}}t^{2\alpha}\,\frac{dt}{t}<\infty.
    \end{align*}
    
    Now, we estimate $I_2.$ By the change of variables $t=\frac{1}{2s}$ we obtain
    \begin{align*}
        I_2=&\int_0^1\int_{\lambda=\frac{1}{2s}}^{\frac{1}{s}}\int_N\left(\frac{|\widetilde{f}(\lambda,n)|}{\lambda^{-\alpha}}\right)^2\,dn\,|c(\lambda)|^{-2}d\lambda\,\frac{ds}{s}\\\leq&C_3\int_0^1\frac{1}{s^{2\alpha}}\int_{\lambda=\frac{1}{2s}}^{\frac{1}{s}}\int_N|\widetilde{f}(\lambda,n)|^2\,dn\,|c(\lambda)|^{-2}d\lambda\,\frac{ds}{s}.
    \end{align*}
    Since $s\lambda\leq 1,$ it follows by \cite[Theorem 4.7 (a)]{KRS} that
    \begin{align*}
        I_2\leq&C_4\int_0^1\frac{1}{s^{2\alpha}}\|M_sf-f\|_2^2\,\frac{ds}{s}.
    \end{align*}
    Therefore, by (\ref{TransC}), $I_2$ is finite. Hence (\ref{FTC}) holds.
\end{proof}

Now, we deal with the analogue of the \cite[Theorem 37]{Titch}. This provides an improvement of the well known Hausdorff-Young inequality. For $q=p',$ the next theorem is also proved in \cite{KR}.

Let $\alpha\in(0,1]$ and $p\geq 1.$ A function $f\in L^p(S)$ is said to be in the H\"older-Lipschitz space $Lip(\alpha;p)$ if there exists a positive constant $C$ such that for all sufficiently small $t\in (0,1)$ we have $\|M_tf-f\|_p\leq Ct^\alpha.$ 

\begin{thm} Let $\alpha\in(0,1],$ $p\in (1,2]$ and $q\in [p,p'].$ Assume that $f\in Lip(\alpha;p)$ and define $$F(\lambda):=\left(\int_N|\widetilde{f}(\lambda+i\gamma_q\rho,n)|^q\,dn\right)^{\frac{1}{q}}.$$ Then, $F\in L^\beta(\mathbb{R}_+,|c(\lambda)|^{-2}\,d\lambda),$ for any $\beta\in\left(\frac{dp'}{d+\alpha p'},p'\right].$
\end{thm}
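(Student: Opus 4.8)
The plan is to combine a mixed-norm Hausdorff--Young inequality for the Helgason Fourier transform with the transference identity (\ref{FTTrans}), the lower bound in Lemma \ref{estijac}, and a dyadic summation controlled by H\"older's inequality. Since $f\in Lip(\alpha;p)\subset L^p(S)$, the first ingredient I would invoke is the Hausdorff--Young inequality
$$\left(\int_{\mathbb{R}_+}F(\lambda)^{p'}\,|c(\lambda)|^{-2}\,d\lambda\right)^{\frac{1}{p'}}\leq C\|f\|_p,$$
valid for $p\in(1,2]$ and $q\in[p,p']$ (the imaginary shift $i\gamma_q\rho$ being precisely what makes the complex power of the Poisson kernel an element of $L^q(N)$; for $q=p'$ this is in \cite{KR}). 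This already gives $F\in L^{p'}(\mathbb{R}_+,|c(\lambda)|^{-2}\,d\lambda)$. Because $\beta\leq p'$ and the measure $|c(\lambda)|^{-2}\,d\lambda$ is finite on $[0,1]$ (where $|c(\lambda)|^{-2}\asymp\lambda^2$ by (\ref{measureconstant})), H\"older's inequality with exponents $\frac{p'}{\beta}$ and its conjugate immediately bounds $\int_0^1 F(\lambda)^\beta|c(\lambda)|^{-2}\,d\lambda$. Hence only the tail $\int_1^\infty F^\beta|c|^{-2}$ requires work.

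For the tail I would set $t=2^{-j}$, $j\geq 0$, and apply the Hausdorff--Young inequality to $M_tf-f\in L^p(S)$. Extending (\ref{FTTrans}) to the shifted argument gives
$$\widetilde{(M_tf-f)}(\lambda+i\gamma_q\rho,n)=\widetilde{f}(\lambda+i\gamma_q\rho,n)\left(\phi_{\lambda+i\gamma_q\rho}(t)-1\right),$$
so the left-hand side of Hausdorff--Young is the weighted $L^{p'}$ norm of $F(\lambda)\,|\phi_{\lambda+i\gamma_q\rho}(t)-1|$. On the dyadic band $\lambda\in[2^j,2^{j+1}]$ we have $\lambda t\geq 1$, and the lower bound of Lemma \ref{estijac} yields $|\phi_{\lambda+i\gamma_q\rho}(t)-1|\geq C>0$; together with the Lipschitz estimate $\|M_tf-f\|_p\leq Ct^\alpha$ this produces the band bound
$$\int_{2^j}^{2^{j+1}}F(\lambda)^{p'}\,|c(\lambda)|^{-2}\,d\lambda\leq C\,2^{-j\alpha p'}.$$

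Next I would apply H\"older's inequality on each band with exponents $\frac{p'}{\beta}$ and $(1-\frac{\beta}{p'})^{-1}$, using $\int_{2^j}^{2^{j+1}}|c(\lambda)|^{-2}\,d\lambda\asymp 2^{jd}$ (from $|c(\lambda)|^{-2}\asymp\lambda^{d-1}$ in (\ref{measureconstant})), to get
$$\int_{2^j}^{2^{j+1}}F(\lambda)^\beta\,|c(\lambda)|^{-2}\,d\lambda\leq C\,2^{j\left(-\alpha\beta+d(1-\frac{\beta}{p'})\right)}.$$
Summing over $j\geq 0$, the geometric series converges exactly when $-\alpha\beta+d(1-\frac{\beta}{p'})<0$, that is $\beta>\frac{dp'}{d+\alpha p'}$, which is the hypothesis; combined with the near-zero estimate this proves $F\in L^\beta(\mathbb{R}_+,|c(\lambda)|^{-2}\,d\lambda)$.

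The hard part will be justifying the two analytic ingredients at the shifted spectral parameter. First, the mixed-norm Hausdorff--Young inequality must be established over the full range $q\in[p,p']$ rather than only at $q=p'$. Second, and more delicately, one must verify the separation $|\phi_{\lambda+i\gamma_q\rho}(t)-1|\geq C$ for $\lambda t\geq 1$ for the \emph{complex-shifted} elementary spherical function, whereas Lemma \ref{estijac} as stated concerns real arguments; the point to check is that the fixed imaginary translation by $i\gamma_q\rho$, with $\gamma_q$ in the bounded range $[\gamma_{p'},\gamma_p]$, does not destroy the lower bound coming from the Jacobi-function asymptotics. Once these are in place, the dyadic summation and the H\"older splitting are routine.
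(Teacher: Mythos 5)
Your proposal is correct in outline but takes a genuinely different route from the paper's. The paper proves the theorem by invoking the restriction theorem \cite[Theorem 4.7]{KRS} as a single black box, in the form
$$\int_{\mathbb{R}}\min\{1,(\lambda t)^{2p'}\}\,F(\lambda)^{p'}\,|c(\lambda)|^{-2}\,d\lambda\leq C\,\|M_tf-f\|_{p}^{p'},$$
and then works with the \emph{low-frequency} regime $\lambda t\leq 1$, where the weight is $(\lambda t)^{2p'}$: this yields $\int_{1\leq\lambda<1/t}\lambda^{2p'}F(\lambda)^{p'}|c(\lambda)|^{-2}\,d\lambda\leq C t^{(\alpha-2)p'}$, after which the same H\"older step you use (exponents $\frac{p'}{\beta}$ and its conjugate, with $\int_1^s|c(\lambda)|^{-2}d\lambda\lesssim s^d$) bounds $\phi(s)=\int_1^s\lambda^{2\beta}F^\beta|c|^{-2}d\lambda$, and an integration by parts (a continuous Abel summation) removes the weight $\lambda^{2\beta}$ under exactly the condition $\beta>\frac{dp'}{d+\alpha p'}$. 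You instead exploit the \emph{high-frequency} regime $\lambda t\geq 1$ and sum dyadic bands; the exponent arithmetic is identical, and your explicit treatment of the region $[0,1]$ (where $|c(\lambda)|^{-2}d\lambda$ is finite) is actually more careful than the paper's, which only bounds the integral over $[1,\infty)$. The two ingredients you flag as ``the hard part''---the mixed-norm Hausdorff--Young inequality over the full range $q\in[p,p']$ and the lower bound $|1-\phi_{\lambda+i\gamma_q\rho}(a_t)|\geq C$ for $\lambda t\geq 1$ at the complex-shifted parameter---are precisely the content of \cite[Theorem 4.7]{KRS} and the lemmas leading to it, so they need not be re-proved: citing that theorem both closes your gap and lets you avoid separating the two ingredients, since the weight $\min\{1,(\lambda t)^{2p'}\}$ already packages the shifted spherical-function estimates. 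One minor point to patch in your dyadic summation: the hypothesis $\|M_tf-f\|_p\leq Ct^\alpha$ is only assumed for sufficiently small $t$, so for the finitely many initial bands $t=2^{-j}$ with $2^{-j}$ not small you should fall back on the uniform bound $\|M_tf-f\|_p\leq C\|f\|_p$ coming from the $L^p$-boundedness of $M_t$.
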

\begin{proof}
	 Using \cite[Theorem 4.7]{KRS} we get
	 $$\int_{\mathbb{R}} \min\{1,(\lambda t)^{2p'}\} F(\lambda)^{p'} |c(\lambda)|^{-2}\,d\lambda  \leq C \|M_tf-f\|_{p}^{p'}.$$
	 Then, for small enough $t$ we have 
	\begin{equation}\label{Est1}
	\int_{|\lambda|<\frac{1}{t}} \lambda^{2p'} F(\lambda)^{p'} |c(\lambda)|^{-2}\,d\lambda  \leq C \left( \frac{\|M_tf-f\|_{p}}{t^2} \right)^{p'}\leq C_1t^{(\alpha-2)p'}.
	\end{equation}
	
	For $s=\frac{1}{t}$ and $\beta< p',$ define $$\phi(s):=\int_1^s \lambda^{2\beta} F(\lambda)^\beta |c(\lambda)|^{-2}\,d\lambda.$$
	By the H\"older inequality with $\frac{\beta}{p'}+\left(1-\frac{\beta}{p'}\right)=1$ and using (\ref{measureconstant}), we get
	\begin{align*}
		\phi(s) & \leq 
		C_2 \left( \int_{1}^s \lambda^{2p'} F(\lambda)^{p'} |c(\lambda)|^{-2}\, d\lambda \right)^{\frac{\beta}{p'}} \left( \int_{1}^s |c(\lambda)|^{-2}\, d\lambda \right)^{1-\frac{\beta}{p'}} 
		\\& \leq C_2 \left( \int_{1}^s \lambda^{2p'} F(\lambda)^{p'} |c(\lambda)|^{-2} d\lambda \right)^{\frac{\beta}{p'}} \left( \int_{1}^s \lambda^{d-1}\, d\lambda \right)^{1-\frac{\beta}{p'}}.
	\end{align*}
	Using (\ref{Est1}) we have
	$$\phi(s)\leq C_2 s^{(2-\alpha) \beta + d\left(1-\frac{\beta}{p'}\right)}.$$
	
	Hence, using the integration by parts, we obtain 
	\begin{align*}
		\int_{1}^s F(\lambda)^{\beta} |c(\lambda)|^{-2}\, d\lambda &= \int_{1}^{s} \lambda^{-2\beta}\, \phi'(\lambda)\, d\lambda \\&= s^{-2\beta} \phi(s)+ 2 \beta \int_1^{s} \lambda^{-2\beta-1} \phi(\lambda)\, d\lambda \\&\leq C_2 s^{-\alpha \beta + d\left(1-\frac{\beta}{p'}\right)} + C_3\int_1^s \lambda^{-1-\alpha \beta + d\left(1-\frac{\beta}{p'}\right)}\,d\lambda\\&=O(1)
	\end{align*}
	provided that $-\alpha \beta + d\left(1-\frac{\beta}{p'}\right)<0,$ that is, $\beta>\frac{dp'}{d+\alpha p'}.$
\end{proof}
	
	\section*{Acknowledgment}
	Manoj Kumar is supported by the NBHM post-doctoral fellowship with Ref. number: 0204/3/2021/R\&D-II/7356. Vishvesh Kumar and Michael Ruzhansky are supported  by the FWO Odysseus 1 grant G.0H94.18N: Analysis and Partial
	Differential Equations, the Methusalem programme of the Ghent University Special Research Fund (BOF) (Grant number 01M01021) and by FWO Senior Research Grant G011522N. MR is also supported by EPSRC grant EP/R003025/2.

	\bibliographystyle{amsplain}

\end{document}